\date{} \setlength{\textwidth}{15truecm}
\newtheorem{theorem}{Theorem}[section]
\newtheorem{proposition}[theorem]{Proposition}
\newtheorem{remark}[theorem]{Remark}
\newtheorem{corollary}[theorem]{Corollary}
\newtheorem{example}[theorem]{Example}
\numberwithin{equation}{section}
\begin{document}

\centerline{{\bf Optimal transportation of measures with a parameter}}

\vskip .2in

\centerline{{\bf Vladimir I. Bogachev$^{a,b,}$\footnote{Corresponding author, vibogach@mail.ru.},
Svetlana N. Popova$^{b,c}$}}

\vskip .2in

$^{a}$ Department of Mechanics and Mathematics,
Moscow State University,
119991 Moscow, Russia

$^{b}$  National Research University Higher School of Economics, Moscow, Russia

$^{c}$ Moscow Institute of Physics and Technology, Dolgoprudny, Moscow Region, Russia

\vskip .2in

{\bf Abstract.}
We consider optimal transportation of measures on metric and topological spaces
in the case where the cost function and marginal distributions depend on a parameter with
values in a metric space. The Hausdorff distance
between the sets of probability measures with given marginals is  estimated via the distances  between the marginals themselves.
 The continuity of the cost of optimal transportation with respect to the parameter is proved in the case of  continuous
dependence of the cost function and marginal distributions on this parameter.
Existence of approximate optimal plans continuous with respect to the parameter is established.
It is shown that the optimal plan is continuous with respect to the parameter in case of uniqueness.
Examples are constructed when there is no continuous selection of optimal plans.
Finally, a general result on convergence of Monge optimal mappings is proved.

\vskip .1in

Keywords: Kantorovich problem, Monge problem, weak convergence,
continuity with respect to a parameter

\vskip .1in

AMS MSC 2020: 28C15, 49Q22, 60B10

\section{Introduction}

Let us recall that the Kantorovich optimal transportation problem deals with a triple $(\mu,\nu, h)$, where $\mu$ and
$\nu$ are Borel probability measures on topological spaces $X$ and $Y$, respectively, and $h\ge 0$ is a Borel function
on~$X\times Y$. The problem concerns minimization of  the integral
$$
\int h\, d\sigma
$$
over all measures $\sigma$ in the set $\Pi(\mu,\nu)$ consisting of Borel probability measures on $X\times Y$
with projections $\mu$ and $\nu$ on the factors, that is, $\sigma (A\times Y)=\mu(A)$ and $\sigma (X\times B)=\nu(B)$
for all Borel sets $A\subset X$ and $B\subset Y$. The measures $\mu$ and $\nu$ are called marginal distributions
or marginals, and $h$ is called a cost function.
In general, there is only infimum $K_h(\mu,\nu)$ (which may be infinite),
but if $h$ is continuous (or lower semicontinuous)
and bounded and $\mu$ and $\nu$ are Radon, then the minimum is attained and measures on which it is attained are called
optimal measures or optimal Kantorovich plans. Moreover, the boundedness of $h$ can be replaced by the assumption that
there is a measure in $\Pi(\mu,\nu)$ with respect to which $h$ is integrable. The problem is also meaningful in the purely
set-theoretic setting, but here we consider the topological case, moreover, the spaces under consideration will be
completely regular, in some results metric. Similarly the multimarginal Kantorovich problem is introduced with
marginals $\mu_1, \ldots,\mu_n$ on spaces $X_1,\ldots,X_n$.
General information about Monge and Kantorovich problems can be found
in \cite{AG}, \cite{B18}, \cite{BK}, \cite{RR}, \cite{Sant}, \cite{V1}, and~\cite{V2}.

We study optimal transportation of measures on metric and topological spaces
in the case where the cost function $h_t$ and marginal distributions $\mu_t$ and $\nu_t$  depend on a parameter $t$ with
values in a metric space. Here the questions naturally arise about the continuity with respect to $t$ of
the optimal cost $K_{h_t}(\mu_t,\nu_t)$ and also about the possibility to select an  optimal plan in $\Pi(\mu_t,\nu_t)$ continuous with respect to
the parameter. In addition, the set of all transport plans $\Pi(\mu_t,\nu_t)$ also depends on the parameter,
so that one can ask about its continuity when the space of sets of measures is equipped with the Hausdorff metric
generated by some metric on the space of measures.
Kantorovich problems depending on a parameter were investigated in several papers, see
\cite{V2}, \cite{Z}, \cite{Mal}, \cite{KNS}, \cite{BM},   and~\cite{BDM},  where the questions of measurability were addressed.
The continuity properties are also of great interest, in particular, they can be useful in the study of
differential equations and inclusions on spaces of measures (see~\cite{BF}),
in regularization of optimal transportation  (see \cite{C21} and~\cite{LMM}), and in other applications.

The main results of the paper are these.

1. The Hausdorff distance
between the sets of probability measures $\Pi(\mu_1,\nu_1)$ and $\Pi(\mu_2,\nu_2)$ is  estimated via the distances  between
 $\mu_1$ and $\mu_2$ and $\nu_1$ and~$\nu_2$ (see Theorem~\ref{maint1}). An analogous result holds for the $p$-Kantorovich
 metric~$W_p$.

2. The cost of optimal transportation is continuous  with respect to the parameter in the case of  continuous
dependence of the cost function and marginal distributions on this parameter (see Theorem~\ref{maint2}).

3. The optimal plan is continuous with respect to the parameter in case of uniqueness (see Corollary~\ref{conv-pl}).

4. There exist approximate optimal plans continuous with respect to the parameter (see Theorem~\ref{t-appr}).

5. Examples are constructed when there is no continuous selection of optimal plans on $[0,1]^2$ with a cost function continuous
with respect to the parameter and marginal distributions equal to Lebesgue measure.

Finally, we prove a general result on convergence of Monge optimal mappings in the spirit
of known results from \cite{AP} and~\cite{DPLS}: according to Corollary~\ref{conv-meas},
Monge optimal mappings taking $\mu_n$ to $\nu_n$ converge in measure $\mu_0$ if $\mu_n\to\mu_0$ in variation,
$\nu_n\to\nu_0$ weakly, the cost functions $h_n$ converge to the cost function $h_0$ uniformly on compact sets,
 and the corresponding optimal Monge mappings are unique.

\section{Notation and terminology}

We recall that a nonnegative Radon measure on a topological space $X$ is a bounded Borel measure $\mu\ge 0$
such that for every Borel set $B$ and every $\varepsilon>0$ there is a compact set $K\subset B$ such that
$\mu(B\backslash K)<\varepsilon$ (see \cite{B07}). If $X$ is a complete separable metric space,
then all Borel measures are Radon.

The space $\mathcal{M}_r(X)$ of signed bounded Radon measures on $X$ can be equipped with
the weak topology generated on the seminorms
$$
\mu\mapsto \biggl|\int f\, d\mu\biggr|,
$$
where $f$ is a bounded continuous function.

A set $\mathcal{M}$ of nonnegative Radon measures on a  space $X$ is called uniformly tight,
if for every $\varepsilon>0$ there exists a  compact set $K\subset X$ such that
$\mu(X\backslash K)<\varepsilon$ for all $\mu\in\mathcal{M}$.

Let $(X, d_X)$ and $(Y, d_Y)$ be metric spaces. The space $X\times Y$ is equipped with the metric
$$
d((x_1,y_1),(x_2,y_2))= d_X(x_1,x_2)+d_Y(y_1,y_2).
$$
The weak topology on the spaces of Radon probability measures $\mathcal{P}_r(X)$, $\mathcal{P}_r(Y)$,  $\mathcal{P}_r(X\times Y)$
is metrizable by  the  corresponding Kantorovich--Rubinshtein metrics  $d_{KR}$ (also called the
Fortet--Mourier  metrics, see \cite{B18}) defined by
$$
d_{KR}(\mu,\nu)=\sup \biggl\{\int f\, d(\mu - \nu)\colon f\in {\rm Lip}_1, \ |f|\le 1\biggr\},
$$
where ${\rm Lip}_1$ is the space of $1$-Lipschitz functions. If $X$ is complete, then
$(\mathcal{P}_r(X), d_{KR})$ is also complete and if $X$ is Polish, then $\mathcal{P}_r(X)$ is also Polish.

The subsets $\mathcal{P}_r^1(X)$, $\mathcal{P}_r^1(Y)$,  $\mathcal{P}_r^1(X\times Y)$, consisting of measures
with respect to which  all functions of the form $x\mapsto d(x,x_0)$ are integrable,
are equipped with the Kantorovich metric
$$
d_{K}(\mu,\nu)=\sup \biggl\{\int f\, d(\mu - \nu)\colon f\in {\rm Lip}_1 \biggr\}.
$$
This metric is also called the Wasserstein metric, but we do not use this historically incorrect terminology.

Since we consider probability measures, in the formula for $d_{K}$ the supremum can be evaluated over functions
$f$ with the additional condition $f(x_0)=0$ for a fixed point~$x_0$. Hence for a space contained in a ball
of radius~$1$  the equality $d_K=d_{KR}$ holds.

Note that an unbounded metric space $(X,d)$ can be equipped with the bounded metric
$\widetilde{d}=\min(d,1)$ generating the original topology. For the new metric we have $\widetilde{d}_K=\widetilde{d}_{KR}$.
Moreover,
\begin{equation}\label{est-two}
2^{-1}\widetilde{d}_K \le d_{KR} \le 2\widetilde{d}_K.
\end{equation}
Indeed, if $|f|\le 1$ and $f$ is  $1$-Lipschitz in the original metric, then with respect to the new metric $f$
is $2$-Lipschitz. On the other hand, every $1$-Lipschitz function $f$ in the new metric vanishing at the
point~$x_0$ satisfies the bound  $|f|\le 1$ and is $2$-Lipschitz in the original metric, since
$\widetilde{d}(x,y)= d(x,y)$ whenever $d(x,y)\le 1$, and when $d(x,y)>1$ the desired inequality follows from the estimate $|f|\le 1$.

It is worth noting that if $X=Y$ and we take the distance as a cost function, then
the equality
$K_d(\mu,\nu)=d_K(\mu,\nu)$
holds on $\mathcal{P}_r^1(X)$, which is called the Kantorovich duality formula.

Similarly, for any $p\in [1,+\infty)$ the subspace $\mathcal{P}_r^p(X)$ in $\mathcal{P}_r(X)$
consisting of measures with respect to which the function $x\mapsto d(x,x_0)^p$ is integrable for some
(and then for all) fixed point~$x_0$, can be equipped with the $p$-Kantorovich metric
$$
W_p(\mu,\nu)=K_{d^p}(\mu, \nu)^{1/p}.
$$

Let us recall that the Hausdorff distance between bounded closed subsets $A$ and $B$ of a metric space $(M,d)$
is defined by the formula
$$
H(A,B)=\max \Bigl\{\sup_{x\in A} d(x,B), \sup_{y\in B} d(y,A)\}.
$$
This distance will be considered  for subsets of the space of
probability measures $\mathcal{P}_r(X\times Y)$ with the Kantorovich--Rubinshtein metric $d_{KR}$
(generated by the metric on $X\times Y$ introduced above)
or its subspace $\mathcal{P}_r^1(X\times Y)$ with the Kantorovich  metric $d_{K}$,
which gives the corresponding Hausdorff distances $H_{KR}$ and $H_K$.
Due to estimates (\ref{est-two}) we can deal with the latter case.

When $\mathcal{P}_r^p(X\times Y)$ is equipped with the metric $W_p$, we obtain the Hausdorff distance
$H_K^p$ on the space of closed subsets of $\mathcal{P}_r^p(X\times Y)$.

Similar constructions are introduced in a more general case of completely regular spaces $X$ and $Y$.
The topologies of such spaces can be defined by families of pseudometrics  $\Psi_X$ and $\Psi_Y$
(recall that a  pseudometric $d$ differs from a metric by the property that the equality $d(x,y)=0$ is allowed
for $x\not=y$).
Then the topology of the product $X\times Y$ is generated by the pseudometrics
$$
((x_1,y_1), (x_2,y_2))\mapsto d_1(x_1,y_1)+d_2(x_2,y_2), \quad d_1\in \Psi_X, d_2\in \Psi_Y.
$$
For a pseudometric $d\in \Psi_X$, in the way indicated above one defines the
Kantorovich and Kantorovich--Rubinshtein pseudometrics $d_{K,d}$ and $d_{KR,d}$ on the spaces $\mathcal{P}_r^\Psi(X)$
and $\mathcal{P}_r(X)$, where the former
consists of measures with respect to which  the functions
$x\mapsto d(x,x_0)$ are integrable for all pseudometrics in $\Psi_X$. It is readily verified that the weak topology on
$\mathcal{P}_r(X)$ is generated by the family of such pseudometrics $d_{KR,d}$.

The Monge problem for the same triple $(\mu, \nu, h)$ is finding a Borel mapping $T\colon X\to Y$ taking $\mu$ into $\nu$,
that is $\nu=\mu\circ T^{-1}$, $(\mu\circ T^{-1})(B)=\mu(T^{-1}(B))$ for all Borel sets $B \subset Y$, for which the integral
$$
\int h(x, T(x))\, \mu(dx)
$$
is minimal. Again, in general there is only the infimum $M_h(\mu, \nu)$ of this integral (possibly, infinite),
but in many interesting cases there exist optimal Monge mappings. In any case,
$K_h(\mu,\nu)\le M_h(\mu, \nu)$, but for non-atomic separable Radon measures and continuous cost functions
one has $K_h(\mu,\nu)= M_h(\mu, \nu)$, see \cite{BKP}, \cite{P}. It follows from this equality that if there is a unique
solution $T$ to the Monge problem, then the image of $\mu$ under the mapping $x\mapsto (x,T(x))$ is an optimal
Kantorovich plan.

We shall need below the so-called ``gluing lemma'',
see, e.g., \cite[Lemma~1.1.6]{BK} or \cite[Lemma~3.3.1]{B18} (in the latter the formulation deals with metric spaces,
   but the proof is actually given for Radon measures on completely regular spaces).
   Let $X_1, X_2, X_3$ be completely regular spaces and let
   $\mu_{1,2}$ and $\mu_{2,3}$ be Radon probability measures on $X_1\times X_2$ and $X_2\times X_3$, respectively,
   such that their projections on $X_2$ coincide.
Then there exists a Radon probability measure $\mu$ on $X_1\times X_2\times X_3$
such  that its projection on $X_1\times X_2$ is $\mu_{1,2}$ and its projection on $X_2\times X_2$ is $\mu_{2,3}$.

\section{Main results}

We start with the following general estimate for measures on completely regular spaces.
For functions $\alpha_1,\ldots,\alpha_n$ on spaces $X_1^2,\ldots, X_n^2$ (where $X_i^2=X_i\times X_i$)
we set
$$
(\alpha_1\oplus \cdots \oplus \alpha_n)((x_1, \ldots, x_n), (x_1', \ldots, x_n'))
=\alpha_1(x_1,x_1')+\cdots+\alpha_n(x_n,x_n'),
$$
where $x_1, x'_1 \in X_1$, \dots, $x_n, x'_n \in X_n$.

\begin{theorem}\label{maint1}
Let $\mu_1, \mu_2\in \mathcal{P}_r(X)$, $\nu\in \mathcal{P}_r(Y)$,
and let $\alpha$ and $\beta$ be continuous nonnegative functions on~$X^2$ and $Y^2$, respectively, where $\beta(y,y)=0$.
Then, for every measure $\sigma_1\in \Pi(\mu_1,\nu)$,  there exists a  measure
$\sigma_2\in \Pi(\mu_2,\nu)$ such that
\begin{equation}\label{est-main-new}
K_{\alpha\oplus \beta}(\sigma_1,\sigma_2)\le K_{\alpha}(\mu_1,\mu_2).
\end{equation}
If $\nu_1, \nu_2\in \mathcal{P}_r(Y)$ and  $\alpha, \beta$ are pseudometrics, then,
for every measure $\sigma_1\in \Pi(\mu_1,\nu_1)$, there exists a measure
$\sigma_2\in \Pi(\mu_2,\nu_2)$ such that
\begin{equation}\label{est-main1-new}
d_{K, \alpha\oplus \beta}(\sigma_1,\sigma_2)\le d_{K, \alpha}(\mu_1,\mu_2)+d_{K, \beta}(\nu_1,\nu_2).
\end{equation}
Hence, for the corresponding Kantorovich and Hausdorff pseudometrics we have
\begin{equation}\label{est-main2-new}
H_{K, \alpha\oplus \beta}(\Pi(\mu_1,\nu_1), \Pi(\mu_2,\nu_2))\le d_{K, \alpha}(\mu_1,\mu_2)+d_{K, \beta}(\nu_1,\nu_2).
\end{equation}
A similar assertion is true for $n$ marginals:
if $\mu_i, \nu_i\in\mathcal{P}_r(X_i)$, $i=1,\ldots,n$, $\alpha_i$ are continuous
pseudometrics on $X_i$,
$\sigma \in \Pi(\mu_1, \ldots, \mu_n)$, then there exists a measure $\pi \in \Pi(\nu_1, \ldots, \nu_n)$ such  that
$$
d_{K, \alpha_1\oplus \cdots\oplus \alpha_n}(\pi, \sigma)
\le d_{K, \alpha_1}(\mu_1,\nu_1)+ \cdots + d_{K, \alpha_n}(\mu_n,\nu_n).
$$
\end{theorem}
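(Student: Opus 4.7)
The plan is to build $\sigma_2$ (and $\pi$ in the multi-marginal case) directly via the gluing lemma, and then to read off from the glued measure an explicit coupling that realises the desired cost bound.

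For (\ref{est-main-new}), we first fix an optimal plan $\pi\in\Pi(\mu_1,\mu_2)$ for the cost $\alpha$; such a plan exists because $\Pi(\mu_1,\mu_2)$ is uniformly tight (both marginals are Radon) and $\sigma\mapsto\int\alpha\,d\sigma$ is lower semicontinuous. Then we apply the gluing lemma with $X_1=Y$, $X_2=X$, $X_3=X$, $\mu_{1,2}=\sigma_1$ (viewed on $Y\times X$) and $\mu_{2,3}=\pi$, both of which share the $X_2$-marginal $\mu_1$, obtaining a Radon probability $\lambda$ on $Y\times X\times X$ whose projection on the first two factors is $\sigma_1$ and on the last two is $\pi$. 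Taking $\sigma_2$ to be the projection of $\lambda$ on the third and first factors gives $\sigma_2\in\Pi(\mu_2,\nu)$, and pushing $\lambda$ forward under $(y,x_1,x_2)\mapsto((x_1,y),(x_2,y))$ yields a coupling $\gamma$ of $\sigma_1$ and $\sigma_2$ on $(X\times Y)^2$. Since $\beta(y,y)=0$,
\[
\int(\alpha\oplus\beta)\,d\gamma=\int\alpha(x_1,x_2)\,d\pi(x_1,x_2)=K_\alpha(\mu_1,\mu_2),
\]
which is (\ref{est-main-new}).

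For (\ref{est-main1-new}), we also take an optimal $\rho\in\Pi(\nu_1,\nu_2)$ for $\beta$ and iterate: first glue $\sigma_1$ with $\pi$ along $\mu_1$ to produce $\lambda_1$ on $X\times X\times Y$, then apply the gluing lemma a second time to $\lambda_1$ (whose $Y$-marginal is $\nu_1$) and $\rho$ on $Y\times Y$, obtaining a Radon probability $\Lambda$ on $X_{\mu_1}\times X_{\mu_2}\times Y_{\nu_1}\times Y_{\nu_2}$ with the required pairwise projections $\sigma_1$, $\pi$, $\rho$. Then we set $\sigma_2$ equal to the projection of $\Lambda$ on $X_{\mu_2}\times Y_{\nu_2}$ and couple $\sigma_1$ with $\sigma_2$ via the push-forward of $\Lambda$ under $(x_1,x_2,y_1,y_2)\mapsto((x_1,y_1),(x_2,y_2))$; the total cost along this coupling equals $\int\alpha\,d\pi+\int\beta\,d\rho=d_{K,\alpha}(\mu_1,\mu_2)+d_{K,\beta}(\nu_1,\nu_2)$, and Kantorovich duality ($K_{\alpha\oplus\beta}=d_{K,\alpha\oplus\beta}$ for pseudometrics) turns this into (\ref{est-main1-new}). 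The Hausdorff estimate (\ref{est-main2-new}) follows at once by symmetry, exchanging the roles of $(\mu_1,\nu_1)$ and $(\mu_2,\nu_2)$. The $n$-marginal assertion is obtained by the same scheme iterated $n$ times: with optimal $\pi^k\in\Pi(\mu_k,\nu_k)$ for $\alpha_k$, we build inductively $\Lambda_k$ on $X_1\times\cdots\times X_n\times X_1\times\cdots\times X_k$ by gluing $\Lambda_{k-1}$ (whose projection on the $k$-th factor is $\mu_k$) to $\pi^k$ along that factor, so that the projection of $\Lambda_n$ on the last $n$ factors is the desired $\pi\in\Pi(\nu_1,\ldots,\nu_n)$ and the natural coupling of $\sigma$ with $\pi$ realises $\sum_k\int\alpha_k\,d\pi^k$.

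The main obstacle is purely organisational: at every application of the gluing lemma one has to verify carefully that the two input measures share the prescribed common marginal and that projecting the resulting measure recovers the intended joint distributions. A smaller point is the reliance on Kantorovich duality $K_d=d_{K,d}$ for continuous pseudometrics on Radon probability measures (valid in the completely regular setting), which allows the coupling-based upper bound to be rephrased as a bound on~$d_{K,\alpha\oplus\beta}$.
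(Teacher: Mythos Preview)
Your proof is correct and rests on exactly the same idea as the paper's: select an optimal coupling of the marginals, invoke the gluing lemma, and push the glued measure forward along the diagonal-type map to obtain an explicit element of $\Pi(\sigma_1,\sigma_2)$ whose $(\alpha\oplus\beta)$-cost equals the optimal $\alpha$-cost. Your treatment of~(\ref{est-main-new}) is identical to the paper's.

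There are two small organisational differences worth noting. For~(\ref{est-main1-new}) you perform a single double gluing to produce $\Lambda$ on $X\times X\times Y\times Y$ and a single coupling of $\sigma_1$ with $\sigma_2$; the paper instead applies~(\ref{est-main-new}) twice---first to move from $\Pi(\mu_1,\nu_1)$ to $\Pi(\mu_2,\nu_1)$, then to $\Pi(\mu_2,\nu_2)$---and finishes with the triangle inequality for $K_{\alpha\oplus\beta}$ (valid because $\alpha\oplus\beta$ is a pseudometric). Both routes invoke Kantorovich duality $K_{\alpha\oplus\beta}=d_{K,\alpha\oplus\beta}$ to state the conclusion in terms of~$d_K$, so neither avoids it. For the $n$-marginal assertion you iterate $n$ gluings to build a measure on a $2n$-fold product; the paper instead observes that it suffices to change one marginal at a time, grouping $X_2\times\cdots\times X_n$ into a single space~$Y$ and invoking the already-proved two-marginal case, which keeps the bookkeeping lighter. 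Your inductive construction is equally valid, just slightly heavier notationally.
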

\begin{proof}
Let us take  a measure $\eta \in \Pi(\mu_1, \mu_2)$ such that
$$
\int_{X\times X} \alpha(x_1, x_2) \, \eta(dx_1\, dx_2) = K_{\alpha}(\mu_1,\mu_2).
$$
   According to the aforementioned gluing lemma,
there exists a measure $\lambda\in \mathcal{P}_r(X_1\times X_2\times Y)$,
where $X_1=X_2=X$, such  that its projection on $X_1\times Y$ is $\sigma_1$ and its projection on $X_1\times X_2$ is $\eta$.
For $\sigma_2$ we take the projection of the measure $\lambda$ on~$X_2\times Y$. The projections of the measure $\sigma_2$ on $X$ and $Y$ equal
$\mu_2$ and $\nu$, respectively. Indeed, the projection on $X$ is obtained by projecting the measure $\lambda$ first on $X_2\times Y$,
next on~$X_2$, which coincides with the composition of the operators of projecting on $X_1\times X_2$ and $X_2$, but this composition takes
$\lambda$ into $\mu_2$, since $\lambda$ is first mapped to $\eta$ and then to~$\mu_2$. The projection of the measure $\sigma_2$ to  $Y$
is obtained by  projecting first on $X_1\times Y$ and then on~$Y$, that is, equals the projection of the measure $\sigma_1$ on~$Y$.

For the proof of the desired estimate we consider the measure $\zeta$ equal to the image of $\lambda$ under the mapping
$$
X_1\times X_2\times Y\to X_1\times Y_1\times X_2\times Y_2, \quad  Y_1=Y_2=Y, \
  (x_1, x_2,y)\mapsto (x_1, y, x_2, y).
$$
Then $\zeta\in \Pi(\sigma_1,\sigma_2)$, since the projection of the measure $\lambda$ on $X_1\times Y$ is $\sigma_1$ and the
projection on $X_2\times Y$ is $\sigma_2$. Moreover,
\begin{align*}
K_{\alpha\oplus \beta}(\sigma_1,\sigma_2)
&\le
\int_{X_1\times Y_1\times X_2\times Y_2}
 \alpha\oplus \beta\, d\zeta
 \\
 &=\int_{X_1\times X_2\times Y} (\alpha\oplus \beta)((x_1,y), (x_2,y))\, \lambda(dx_1\, dx_2\, dy)
\\
&=
\int_{X_1\times X_2\times Y}  \alpha(x_1,x_2)\, \lambda(dx_1\, dx_2\, dy)
\\
&=
\int_{X_1\times X_2}  \alpha(x_1,x_2)\, \eta(dx_1\, dx_2)= K_{\alpha}(\mu_1,\mu_2).
\end{align*}
In case of pseudometrics we
first pick $\sigma_2\in \Pi(\mu_2,\nu_1)$ satisfying the bound
$$
K_{\alpha\oplus \beta}(\sigma_1,\sigma_2)\le d_K(\mu_1,\mu_2)
$$
and next pick
$\sigma_3\in \Pi(\mu_2,\nu_2)$ satisfying the bound
$$
K_{\alpha\oplus \beta}(\sigma_2,\sigma_3)\le d_K(\nu_1,\nu_2).
$$
It remains to use the triangle inequality
$$
K_{\alpha\oplus \beta}(\sigma_1,\sigma_3)\le K_{\alpha\oplus \beta}(\sigma_1,\sigma_2)+K_{\alpha\oplus \beta}(\sigma_2,\sigma_3).
$$

Let us proceed to the  case of $n$ marginals. Here  it suffices to prove that there  exists a  measure
$\pi \in \Pi(\nu_1, \mu_2, \ldots, \mu_n)$ such that
$$
d_{K, \alpha_1 \oplus \cdots \oplus \alpha_n}(\pi,\sigma) \le d_{K, \alpha_1}(\mu_1,\nu_1).
$$
Let $\sigma_1$ be the projection of $\sigma$ on $X_2 \times \ldots \times X_n$.
Then $\sigma \in \Pi(\mu_1,  \sigma_1)$.
As shown above, there exists a measure $\pi \in \Pi(\nu_1, \sigma_1)$ for which
$d_{K, \alpha_1 \oplus \cdots \oplus \alpha_n}(\pi,\sigma) \le d_{K, \alpha_1}(\mu_1,\nu_1)$.
In addition, the inclusion $\pi \in \Pi(\nu_1, \mu_2, \ldots, \mu_n)$ holds, which completes the proof.
\end{proof}

\begin{remark}
\rm
If in place of the continuity of $\alpha$ and $\beta$ we require only their
Borel measurability  (or universal measurability), then the reasoning used above shows that for every
$\varepsilon>0$ there is a  measure $\sigma_2^\varepsilon$ for which
$$
K_{\alpha\oplus \beta}(\sigma_1,\sigma_2^\varepsilon)\le K_{\alpha}(\mu_1,\mu_2)+\varepsilon.
$$
To  this end we take a measure $\eta^\varepsilon \in \Pi(\mu_1, \mu_2)$ such  that
$$
\int_{X\times X} \alpha(x_1, x_2) \, \eta^\varepsilon(dx_1\, dx_2) \le K_{\alpha}(\mu_1,\mu_2)+\varepsilon,
$$
then  the last equality in the proof with $K_{\alpha}(\mu_1,\mu_2)$ should be replaced with the
inequality with $K_{\alpha}(\mu_1,\mu_2)+\varepsilon$.
\end{remark}

A straightforward modification of the proof yields the following assertion for $W_p$.

\begin{corollary}
Let $p\in [1,+\infty)$ and
$\mu_1, \mu_2, \nu_1, \nu_2\in \mathcal{P}_r^p(X)$.
For every measure $\sigma_1\in \Pi(\mu_1,\nu_1)$, there exists a measure
$\sigma_2\in \Pi(\mu_2,\nu_2)$ such that
$$
W_p(\sigma_1,\sigma_2)\le W_p(\mu_1,\mu_2)+ W_p(\nu_1,\nu_2).
$$
Hence for the Hausdorff distance $H_p$ we have
$$
H_{p}(\Pi(\mu_1,\nu_1), \Pi(\mu_2,\nu_2))\le  W_p(\mu_1,\mu_2)+ W_p(\nu_1,\nu_2).
$$
\end{corollary}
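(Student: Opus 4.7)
My plan is to follow the construction in the proof of Theorem~\ref{maint1}, but to perform two gluings simultaneously and to replace the triangle inequality of a Kantorovich pseudometric by Minkowski's inequality in $L^{p}$, which is the natural substitute since $d^{p}$ fails to be subadditive for $p>1$.

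First, I would pick optimal $W_p$-plans $\eta\in\Pi(\mu_1,\mu_2)$ and $\tau\in\Pi(\nu_1,\nu_2)$, that is, Radon measures with
\begin{equation*}
\int d^{p}\,d\eta=W_p(\mu_1,\mu_2)^{p},\qquad \int d^{p}\,d\tau=W_p(\nu_1,\nu_2)^{p}.
\end{equation*}
Labelling four copies of $X$ as $X_1,Y_1,X_2,Y_2$, I view $\sigma_1$ as a measure on $X_1\times Y_1$, $\eta$ on $X_1\times X_2$ and $\tau$ on $Y_1\times Y_2$. Applying the gluing lemma to $\sigma_1$ and $\eta$ along their common $X_1$-marginal $\mu_1$ produces a Radon measure $\lambda$ on $X_1\times Y_1\times X_2$ whose projections on $X_1\times Y_1$ and $X_1\times X_2$ are $\sigma_1$ and $\eta$. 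A second gluing, now of $\lambda$ and $\tau$ along their common $Y_1$-marginal $\nu_1$, produces a Radon measure $\pi$ on $X_1\times Y_1\times X_2\times Y_2$ whose projection on $X_1\times Y_1\times X_2$ is $\lambda$ and whose projection on $Y_1\times Y_2$ is $\tau$. Defining $\sigma_2$ to be the projection of $\pi$ on $X_2\times Y_2$ and tracing the projections gives $\sigma_2\in\Pi(\mu_2,\nu_2)$ and $\pi\in\Pi(\sigma_1,\sigma_2)$.

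Using $\pi$ as a (suboptimal) coupling of $\sigma_1$ and $\sigma_2$ and recalling that the metric on $X\times X$ is $d(x,x')+d(y,y')$, I would estimate
\begin{equation*}
W_p(\sigma_1,\sigma_2)\le\biggl(\int\bigl[d(x_1,x_2)+d(y_1,y_2)\bigr]^{p}\,\pi(dx_1\,dy_1\,dx_2\,dy_2)\biggr)^{1/p}.
\end{equation*}
Minkowski's inequality in $L^{p}(\pi)$ bounds the right-hand side by the sum of the $L^{p}(\pi)$-norms of $(x_1,x_2)\mapsto d(x_1,x_2)$ and $(y_1,y_2)\mapsto d(y_1,y_2)$; these norms equal $W_p(\mu_1,\mu_2)$ and $W_p(\nu_1,\nu_2)$ respectively, since by construction the $(x_1,x_2)$-marginal of $\pi$ is $\eta$ and the $(y_1,y_2)$-marginal is $\tau$. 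The Hausdorff-distance bound then follows by symmetry between $(\mu_1,\nu_1)$ and $(\mu_2,\nu_2)$.

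The only step that really requires care, relative to the proof of Theorem~\ref{maint1}, is this substitution of Minkowski for the triangle inequality of a Kantorovich pseudometric: one cannot bound $W_p(\sigma_1,\sigma_2)$ by chaining two intermediate couplings $\sigma_1\to\tilde\sigma\to\sigma_2$ and adding two $K_{d^{p}}$-distances, because $d^{p}$ is not a pseudometric for $p>1$. Performing both gluings before estimating and then invoking Minkowski sidesteps this difficulty, and the rest of the argument is routine bookkeeping with projections of Radon measures.
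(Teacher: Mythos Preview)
Your argument is correct, but it differs from the paper's and your stated reason for the deviation rests on a misunderstanding. The paper proceeds in two steps exactly as in Theorem~\ref{maint1}: first it uses the diagonal construction to produce $\sigma_0\in\Pi(\mu_2,\nu_1)$ with $W_p(\sigma_1,\sigma_0)\le W_p(\mu_1,\mu_2)$ (the coupling $\zeta$ has $y_1=y_2$, so the product cost $[d(x_1,x_2)+d(y_1,y_2)]^{p}$ collapses to $d(x_1,x_2)^{p}$, whose integral against $\eta$ is $W_p(\mu_1,\mu_2)^{p}$), then symmetrically $\sigma_2\in\Pi(\mu_2,\nu_2)$ with $W_p(\sigma_0,\sigma_2)\le W_p(\nu_1,\nu_2)$, and finally invokes the ordinary triangle inequality for the metric $W_p$. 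Your claim that this chaining fails confuses $K_{d^{p}}$ with $W_p=K_{d^{p}}^{1/p}$: the latter \emph{is} a metric, so the triangle inequality is available.

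That said, your single four-fold gluing followed by Minkowski's inequality in $L^{p}(\pi)$ is a valid alternative. It is in effect the paper's argument with the proof of the triangle inequality for $W_p$ unfolded and merged into the construction: the standard proof of that triangle inequality is precisely ``glue two couplings and apply Minkowski'', which is what your combined step accomplishes. The paper's version is slightly shorter because it reuses the diagonal trick verbatim from Theorem~\ref{maint1}; your version is more self-contained in that it does not appeal to the metric property of~$W_p$ as a black box.
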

\begin{proof}
Repeating the reasoning given above with $h=d^p$, we first find a measure
$\sigma_0\in \Pi(\mu_2,\nu_1)$ with $W_p(\sigma_1,\sigma_0)\le W_p(\mu_1,\mu_2)$,
and then we take a measure $\sigma_2\in \Pi(\mu_2,\nu_2)$ with
$W_p(\sigma_0,\sigma_2)\le W_p(\nu_1,\nu_2)$.
\end{proof}

We recall that a completely regular space is called
  sequentially Prohorov if every sequence of probability
  Radon measures on this space weakly converging to a  Radon measure is uniformly tight.
  For example, complete metric spaces are sequentially Prohorov (see \cite{B07} or \cite{B18}
  about this property).

\begin{theorem}\label{maint2}
Let $X$ and $Y$ be
completely regular spaces. Suppose that  measures $\mu_n\in \mathcal{P}_r(X)$  converge weakly to a measure $\mu\in \mathcal{P}_r(X)$,
measures $\nu_n\in \mathcal{P}_r(Y)$  converge weakly to a measure $\nu \in \mathcal{P}_r(Y)$,
both sequences are uniformly tight {\rm(}which holds automatically if both spaces are sequentially Prohorov{\rm)},
and continuous functions
$h_n\colon X\times Y\to [0,+\infty)$ converge to a function $h\colon X\times Y\to [0,+\infty)$
uniformly on compact sets. Suppose also that there are
nonnegative Borel functions $a_n\in L^1(\mu_n)$ and $b_n\in L^1(\nu_n)$ such that
\begin{equation}\label{main-bound}
h_n(x,y)\le a_n(x)+b_n(y),
\quad
\lim\limits_{R\to+\infty} \sup_n \biggl(\int_{\{a_n\ge R\}} a_n\, d\mu_n+ \int_{\{b_n\ge R\}} b_n\, d\nu_n\biggr)=0.
\end{equation}
Then
$$
K_h(\mu,\nu)=\lim\limits_{n\to\infty} K_{h_n}(\mu_n,\nu_n).
$$
In particular, this is true if
$\sup_n (\|a_n\|_{L^p(\mu_n)}+\|b_n\|_{L^p(\nu_n)})<\infty$
for some $p>1$.
\end{theorem}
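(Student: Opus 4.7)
The plan is to establish $\limsup_{n\to\infty} K_{h_n}(\mu_n,\nu_n)\le K_h(\mu,\nu)$ and $\liminf_{n\to\infty} K_{h_n}(\mu_n,\nu_n)\ge K_h(\mu,\nu)$ separately, after first checking that $K_h(\mu,\nu)<\infty$. The finiteness follows by plugging the product plan $\mu_n\otimes\nu_n$ into the bound $h_n\le a_n+b_n$, which makes $\int h_n\,d(\mu_n\otimes\nu_n)$ uniformly bounded under (\ref{main-bound}); passing to the limit through the truncations $h\wedge R$ (bounded continuous functions for which $\int(h\wedge R)\,d(\mu_n\otimes\nu_n)\to\int(h\wedge R)\,d(\mu\otimes\nu)$ by weak convergence of $\mu_n\otimes\nu_n$ together with uniform convergence of $h_n\wedge R$ on compacts and tightness) and then $R\to\infty$ by monotone convergence yields $\int h\,d(\mu\otimes\nu)<\infty$.

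For the upper bound, I would fix $\varepsilon>0$, pick $\sigma\in\Pi(\mu,\nu)$ with $\int h\,d\sigma\le K_h(\mu,\nu)+\varepsilon$, and invoke Theorem \ref{maint1} with bounded continuous pseudometrics generating the topologies on $X$ and $Y$ (in the metric case, the truncated metrics $\widetilde{d}_X,\widetilde{d}_Y$ together with (\ref{est-two})) to build $\sigma_n\in\Pi(\mu_n,\nu_n)$ with $\sigma_n\to\sigma$ weakly. Then $\limsup K_{h_n}(\mu_n,\nu_n)\le\limsup\int h_n\,d\sigma_n=\int h\,d\sigma\le K_h(\mu,\nu)+\varepsilon$, and $\varepsilon\to 0$ closes this direction, provided the convergence $\int h_n\,d\sigma_n\to\int h\,d\sigma$ is in hand.

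For the lower bound, I would choose $\tau_n\in\Pi(\mu_n,\nu_n)$ with $\int h_n\,d\tau_n\le K_{h_n}(\mu_n,\nu_n)+1/n$; uniform tightness of the marginals makes $\{\tau_n\}$ uniformly tight, so one can extract $\tau_{n_k}\to\tau$ weakly with $\tau\in\Pi(\mu,\nu)$. The same convergence of integrals along the subsequence then gives $K_h(\mu,\nu)\le\int h\,d\tau=\lim_k\int h_{n_k}\,d\tau_{n_k}\le\liminf_k K_{h_{n_k}}(\mu_{n_k},\nu_{n_k})$, and a standard ``every subsequence has a further subsequence'' argument lifts this bound to the full sequence.

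The common engine is the lemma: if $\sigma_n\to\sigma$ weakly, $h_n\to h$ uniformly on compact sets, and $\lim_{R\to\infty}\sup_n\int_{\{h_n>R\}}h_n\,d\sigma_n=0$, then $\int h_n\,d\sigma_n\to\int h\,d\sigma$; this is proved by truncating to $h_n\wedge R$ (uniformly bounded, uniformly convergent on compacts, so $\int(h_n\wedge R)\,d\sigma_n\to\int(h\wedge R)\,d\sigma$ by the usual tightness-plus-uniform-convergence argument) and letting $R\to\infty$. The uniform-integrability hypothesis must be derived from (\ref{main-bound}) by exploiting the marginal identity: on $\{h_n>R\}\subset\{a_n>R/2\}\cup\{b_n>R/2\}$ the direct terms $\int a_n\mathbf{1}_{\{a_n>R/2\}}\,d\mu_n$ and $\int b_n\mathbf{1}_{\{b_n>R/2\}}\,d\nu_n$ vanish uniformly by hypothesis, while each cross term $\int a_n(x)\mathbf{1}_{\{b_n(y)>R/2\}}\,d\sigma_n(x,y)$ is split at an auxiliary level $S=\sqrt{R}$ and bounded by $\int a_n\mathbf{1}_{\{a_n>S\}}\,d\mu_n + S\,\nu_n\{b_n>R/2\} \le \int a_n\mathbf{1}_{\{a_n>S\}}\,d\mu_n + 2SC/R$ with $C=\sup_n\int b_n\,d\nu_n<\infty$ via Markov's inequality. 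I expect the principal technical obstacle to be precisely this cross-term bookkeeping together with the careful passage from truncated to full integrals; the last sentence of the theorem then follows from de la Vall\'ee Poussin's criterion.
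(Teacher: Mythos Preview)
Your outline is close to the paper's and is correct in the metric case, but the upper-bound step has a gap in the stated generality of completely regular spaces. Theorem~\ref{maint1} produces, for each \emph{fixed} pair of pseudometrics $(\alpha,\beta)$, a measure $\sigma_n^{(\alpha,\beta)}\in\Pi(\mu_n,\nu_n)$ with $d_{K,\alpha\oplus\beta}(\sigma,\sigma_n^{(\alpha,\beta)})\to 0$; different pseudometrics may yield different approximants, so you cannot in general extract a \emph{single} sequence $\sigma_n$ with $\sigma_n\to\sigma$ weakly (which would require convergence for \emph{every} generating pseudometric simultaneously). Your ``common engine'' lemma therefore does not apply as written. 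The paper sidesteps this: after fixing a compact $S=S_1\times S_2$ carrying most of the mass, it extends $h|_S$ to a bounded function $g$ that is $1$-Lipschitz for a \emph{specific} pair $(\alpha,\beta)$ built from~$g$, applies Theorem~\ref{maint1} with that pair to get $\zeta_n$ with $d_{K,\alpha\oplus\beta}(\sigma,\zeta_n)<\varepsilon$, and compares $\int g\,d\sigma$ with $\int g\,d\zeta_n$ directly---no weak convergence of plans is claimed or needed. In metric spaces your route is fine, since a single bounded metric suffices and $d_K$-convergence already gives weak convergence.

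Two smaller remarks. First, your cross-term bookkeeping with an auxiliary level $S=\sqrt{R}$ is more work than necessary: on $\{h_n\ge R\}$ one of $a_n,b_n$ is at least $R/2$, and whichever is larger dominates the other, so pointwise $h_n I_{\{h_n\ge R\}}\le 2a_n I_{\{a_n\ge R/2\}}+2b_n I_{\{b_n\ge R/2\}}$; integrating against any $\eta\in\Pi(\mu_n,\nu_n)$ immediately gives the uniform tail bound with no cross terms. Second, the paper organizes the reduction differently---first the constant bounded case $h_n\equiv h\le 1$, then varying $h_n\le 1$, then the truncation $\min(h_n,R)$---whereas you package everything into one uniform-integrability lemma; both organizations work once the weak-convergence issue above is repaired.
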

\begin{proof}
We first consider the case where $h_n\equiv h\le 1$.
Since $h$ is continuous on compact sets,
there are optimal measures $\sigma_n\in \Pi(\mu_n,\nu_n)$ for the function $h$ (see \cite[comments after Theorem~1.2.1]{BK}).
By our assumption, the sequences of measures $\mu_n$ and $\nu_n$ are uniformly tight, which  implies
the uniform tightness of the  sequence of measures $\sigma_n$.
This sequence contains some measure $\sigma$ in its closure  in the  weak topology, moreover, $\sigma\in \Pi(\mu,\nu)$.
The integral of $h$ with respect to the measure $\sigma$ is a limit point of the integrals against the measures $\sigma_n$, that is, of the
numbers $K_h(\mu_n,\nu_n)$. Hence $K_h(\mu,\nu)\le \liminf_{n\to\infty} K_h(\mu_n,\nu_n)$.

We show that $K_h(\mu,\nu)\ge \limsup_{n\to\infty} K_h(\mu_n,\nu_n)$.
Let $\varepsilon>0$.  Passing to a subsequence, we can assume that
$K_h(\mu_n,\nu_n)\to \limsup_{n\to\infty} K_h(\mu_n,\nu_n)$.
Let us take  compact sets $S_1\subset X$, $S_2\subset Y$ such that
$$
\mu_n(X\backslash S_1)+\nu_n(Y\backslash S_2) <\varepsilon \quad \forall\, n.
$$
The set  $S=S_1\times S_2$ is compact in $X\times Y$ and
\begin{equation}\label{ti1}
\zeta((X\times Y)\backslash S)<\varepsilon \quad \forall\, \zeta\in \Pi(\mu_n, \nu_n), \forall\, n.
\end{equation}
We can  take  bounded continuous pseudometrics  $\alpha$ and $\beta$
 on the spaces $X$ and $Y$ and a function $g\colon X\times Y\to [0,1]$ such that
$$
|g(x,y)-g(x_1,y_1)|\le \alpha(x,x_1)+\beta(y,y_1) \quad \forall\, x,x_1\in X,\, y,y_1\in Y,
$$
$$
g(x,y)=h(x,y) \quad \forall\, (x,y)\in S.
$$
There are several known ways to do this. One is this. Let us embed our spaces $X$ and $Y$ homeomorphically
into locally convex spaces $X_1$ and~$Y_1$ (or just equip them with uniformities generating the original topologies).
Then the function $h$ is uniformly continuous on~$S$. It
 has a uniformly continuous extension $g\colon X_1\times Y_1\to [0,1]$ (this is true for any subsets, see \cite{Kat},
 but for a compact subset it suffices to take a continuous extension to the Stone-\v{C}ech compactification
 of $X_1\times Y_1$).
The function
$$
\alpha(x_1,x_2)=\sup_y |g(x_1,y)-g(x_2,y)|
$$
is a uniformly continuous pseudometric on~$X$, and the
function
$$
\beta(y_1,y_2)=\sup_x |g(x,y_1)-g(x,y_2)|
$$
is a uniformly continuous pseudometric on~$Y$. In addition,
$$
|g(x,y)-g(x_1,y_1)|\le |g(x,y)-g(x_1,y)|+|g(x_1,y)-g(x_1,y_1)|,
$$
so $|g(x,y)-g(x_1,y_1)|\le \alpha(x,x_1)+\beta(y,y_1)$.

Let us consider the Kantorovich pseudometrics $d_{K,\alpha}$ and $d_{K,\beta}$
 on $X$ and $Y$ generated by these pseudometrics $\alpha$ and $\beta$.
For all $n$ sufficiently large we have
$$
d_{K,\alpha}(\mu_n,\mu)+d_{K,\beta}(\nu_n,\nu)< \varepsilon.
$$
By the theorem proved above
 there exist measures $\zeta_n\in \Pi(\mu_n,\nu_n)$ such that
$$
d_{K, \alpha\oplus\beta}(\sigma,\zeta_n)< \varepsilon.
$$
Therefore, by (\ref{ti1}) one has
$$
\int g\, d\sigma \ge \int g\, d\zeta_n-\varepsilon
\ge \int h\, d\zeta_n  - 3\varepsilon \ge K_h(\mu_n,\nu_n)-3\varepsilon.
$$
Since $\varepsilon$ was arbitrary, we obtain the estimate
$$
K_h(\mu,\nu)\ge \limsup_{n\to\infty} K_h(\mu_n,\nu_n).
$$

We now consider the case of different $h_n$, but with a common bound $h_n\le 1$.
Let $\varepsilon>0$. Using the same compact set $S$ as above, we find  $N$ such that
$|h_n-h|\le \varepsilon$ for all $(x,y)\in S$ and $n\ge N$. Then
$$
|K_h(\mu_n,\nu_n)- K_{h_n}(\mu_n,\nu_n)|\le 2\varepsilon,
$$
because by (\ref{ti1}) for every measure $\eta\in \Pi(\mu_n,\nu_n)$ we have the estimate
$$
\eta((X\times Y)\backslash S)<\varepsilon,
$$
so the difference between the integrals of $h_n$ and $h$ with respect to the  measure $\eta$
does not exceed $2\varepsilon$. This yields our assertion in the  case of uniformly
bounded $h_n$. The general case  reduces easily to this case, because for the functions $\min (h_n,R)$
we have
\begin{align*}
\int [h_n -\min (h_n,R)] \, d\eta &\le  \int h_n I_{\{h_n\ge R\}}\, d\eta\le
\int [2a_n I_{\{a_n\ge R/2\}} +2b_n I_{\{b_n\ge R/2\}}]\, d\eta
\\
&=2 \int_{\{a_n\ge R/2\}} a_n\, d\mu_n +
2\int_{\{b_n\ge R/2\}} b_n\, d\nu_n
\end{align*}
for all measures $\eta\in\Pi(\mu_n,\nu_n)$.
\end{proof}

\begin{corollary}\label{conv-pl}
If in the situation of the previous theorem optimal plans $\sigma_n$ and $\sigma$ for the triples
$(\mu_n,\nu_n,h_n)$ and $(\mu,\nu,h)$ are unique, then the measures $\sigma_n$ converge weakly to $\sigma$.
\end{corollary}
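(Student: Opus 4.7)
The plan is to combine Theorem~\ref{maint2} with a subsequence argument based on Prohorov's theorem and the uniqueness of the optimal plan for the limit problem. By the theorem we already have $\int h_n\,d\sigma_n = K_{h_n}(\mu_n,\nu_n)\to K_h(\mu,\nu)$. First I would check that $\{\sigma_n\}$ is uniformly tight: given $\varepsilon>0$, uniform tightness of the marginals produces compacts $S_1\subset X$ and $S_2\subset Y$ with $\mu_n(X\setminus S_1)+\nu_n(Y\setminus S_2)<\varepsilon$ for every $n$, so any $\sigma_n\in\Pi(\mu_n,\nu_n)$ satisfies $\sigma_n((X\times Y)\setminus (S_1\times S_2))<\varepsilon$. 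By Prohorov's theorem for Radon measures, every subsequence of $\{\sigma_n\}$ admits a further subsequence $\{\sigma_{n_k}\}$ weakly convergent to some Radon probability measure $\tau$ on $X\times Y$, and weak continuity of the projections forces $\tau\in\Pi(\mu,\nu)$.

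The key step is to show that any such cluster point $\tau$ is itself an optimal plan for $(\mu,\nu,h)$. The inequality $\int h\,d\tau\ge K_h(\mu,\nu)$ holds simply because $\tau\in\Pi(\mu,\nu)$. For the reverse, I would argue by truncation: for each $R>0$ set $h^R=\min(h,R)$ and $h_n^R=\min(h_n,R)$, so that $h_n^R\to h^R$ uniformly on compacts and all these functions are bounded by $R$. Combining uniform-on-compacts convergence with the weak convergence $\sigma_{n_k}\to\tau$ and the uniform tightness of $\{\sigma_{n_k}\}\cup\{\tau\}$ gives $\int h_{n_k}^R\,d\sigma_{n_k}\to \int h^R\,d\tau$; since $h_{n_k}^R\le h_{n_k}$, this yields $\int h^R\,d\tau\le \lim_k \int h_{n_k}\,d\sigma_{n_k}=K_h(\mu,\nu)$. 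Letting $R\to\infty$ and applying monotone convergence on the left produces $\int h\,d\tau\le K_h(\mu,\nu)$, hence equality, and $\tau$ is optimal for $(\mu,\nu,h)$.

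By the uniqueness hypothesis, $\tau=\sigma$. Consequently, every subsequence of $\{\sigma_n\}$ has a further subsequence that converges weakly to $\sigma$, and the standard subsequence principle then gives $\sigma_n\to\sigma$ weakly. The main technical obstacle is justifying the integral convergence $\int h_{n_k}^R\,d\sigma_{n_k}\to \int h^R\,d\tau$ when $h$ is only a uniform-on-compacts limit (so $h^R$ need not be globally continuous) and the test functions vary with $k$. This is handled by adding and subtracting $\int h^R\,d\sigma_{n_k}$, using uniform convergence $h_{n_k}^R\to h^R$ on a compact set $S$ carrying $\sigma_{n_k}$ and $\tau$ up to $\varepsilon$, and extending $h^R|_S$ to a bounded continuous function on the ambient product via a Tietze / Stone--\v{C}ech type argument, exactly the kind of reduction already performed in the proof of Theorem~\ref{maint2}.
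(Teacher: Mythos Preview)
Your argument is correct and follows essentially the same route as the paper: uniform tightness of $\{\sigma_n\}$ from tightness of the marginals, passage to a cluster point, a truncation $\min(h,R)$ combined with uniform-on-compacts convergence and tightness to control the cost at the limit, and finally uniqueness to identify the cluster point with $\sigma$. The paper phrases the optimality of the cluster point as a proof by contradiction while you do it directly via $\int h^R\,d\tau\le\lim_k\int h_{n_k}\,d\sigma_{n_k}$ and monotone convergence, but the underlying mechanism is identical; you also make explicit the Tietze/Stone--\v{C}ech extension needed because $h^R$ is only continuous on compacta, a point the paper uses implicitly.

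One small caveat: in the generality of Theorem~\ref{maint2} the spaces $X,Y$ are merely completely regular, so the weak topology on $\mathcal{P}_r(X\times Y)$ need not be metrizable and Prohorov's theorem gives relative compactness but not necessarily subsequential limits. The paper therefore works with subnets rather than subsequences. Your argument goes through verbatim if you replace ``subsequence'' by ``subnet'' (the subsequence principle has a net analogue: if every subnet has a further subnet converging to $\sigma$, the original sequence converges to $\sigma$); alternatively, argue by contradiction that if $\sigma_n\not\to\sigma$ then some subsequence avoids a weak neighborhood of $\sigma$, yet by relative compactness this subsequence has $\sigma$ as a cluster point, which is impossible.
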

\begin{proof}
The sequence of measures $\sigma_{n}$ is uniformly tight by the uniform tightness of its marginals.
Hence it has a weakly convergent subnet. Note that
if its subnet $\{\sigma_\alpha\}$ converges weakly to a measure~$\sigma_0$, then $\sigma_0$ is optimal for~$h$.
Indeed, we show that
\begin{equation}\label{b1}
\int h\, d\sigma \le \lim\limits_{n\to\infty} K_{h_n}(\mu_{n},\nu_{n}).
\end{equation}
Otherwise there are numbers $\varepsilon>0$ and $R>1$ such that
$$
\int \min(h,R)\, d\sigma > \lim\limits_{n\to\infty} K_{h_n}(\mu_{n},\nu_{n})+\varepsilon.
$$
By weak convergence
$$
\int \min(h,R)\, d\sigma = \lim\limits_{\alpha} \int \min(h,R)\, d\sigma_{\alpha}.
$$
Hence we can assume that
$$
\int \min(h,R)\, d\sigma_{\alpha}\ge \lim\limits_{n\to\infty} K_{h_n}(\mu_{n},\nu_{n})+\varepsilon.
$$
Then we can find infinitely many indices $n$ such that
$$
\int \min(h,R)\, d\sigma_{n}\ge \int h_n\, d\sigma_n +\varepsilon/2.
$$
However, it is clear that for all $n$ large enough
$$
\int \min(h,R)\, d\sigma_{n}\le \int \min(h_n,R)\, d\sigma_n +\varepsilon/4,
$$
because the measures $\sigma_n$ are uniformly tight and the functions $\min(h_n,R)$ converge
to $\min(h,R)$ uniformly on compact sets.

The right-hand of (\ref{b1}) equals $K_{h}(\mu,\nu)$ by the theorem.
Hence $\sigma_0$ is optimal and then
$\sigma_0=\sigma$ by uniqueness. Therefore, the measures $\sigma_{n}$ converge weakly to $\sigma$.
\end{proof}

For uncountable families of measures and functions an analog of (\ref{main-bound}) reads as
\begin{equation}\label{mainboundt}
h_t(x,y)\le a_t(x)+b_t(y),
\quad
\lim\limits_{R\to+\infty} \sup_t \biggl(\int_{\{a_t\ge R\}} a_t\, d\mu_t+ \int_{\{b_t\ge R\}} b_t\, d\nu_t\biggr)=0.
\end{equation}
In particular, this is true if
$$
\sup_t [\|a_t\|_{L^p(\mu_t)}+\|a_t\|_{L^p(\nu_t)}]<\infty
$$
for some $p>1$.

\begin{corollary}
Let $X$ and $Y$ be  sequentially Prohorov
completely regular spaces and let $T$ be a topological space.
Suppose that  the mappings
$$
t\mapsto \mu_t\in \mathcal{P}_r(X)
\quad \hbox{and}\quad
t\mapsto \nu_t\in \mathcal{P}_r(Y)
$$
 are sequentially continuous and
$(t,x,y)\mapsto h_t(x,y)$, $T\times X\times Y\to [0,+\infty)$ is a continuous function.
Suppose also that there exist
nonnegative Borel functions \mbox{$a_t\in L^1(\mu_t)$} and $b_t\in L^1(\nu_t)$ such that {\rm(\ref{mainboundt})} holds.
Then the function $t\mapsto K_{h_t}(\mu_t,\nu_t)$ is sequentially continuous.
\end{corollary}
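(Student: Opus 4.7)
The plan is to reduce the corollary directly to Theorem~\ref{maint2} by verifying, along an arbitrary sequence $t_n\to t_0$ in $T$, each of the four hypotheses of that theorem for the data $(\mu_{t_n},\nu_{t_n},h_{t_n})$. The weak convergence $\mu_{t_n}\to\mu_{t_0}$ and $\nu_{t_n}\to\nu_{t_0}$ is exactly the sequential continuity of the marginal assignments; the sequential Prohorov property of $X$ and $Y$ upgrades these weakly convergent sequences to uniformly tight ones; and the majorization condition (\ref{main-bound}) along the subsequence is inherited from (\ref{mainboundt}), since the supremum over $\{t_n\}$ is dominated by the supremum over all $t$.

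The only nontrivial step is to show that $h_{t_n}\to h_{t_0}$ uniformly on compact subsets of $X\times Y$. I would deduce this from the joint continuity of $(t,x,y)\mapsto h_t(x,y)$ by a standard compactness argument: fix a compact $K\subset X\times Y$ and $\varepsilon>0$; for each $(x,y)\in K$ joint continuity at $(t_0,x,y)$ furnishes a neighborhood $U_{x,y}\times V_{x,y}$ of $(t_0,x,y)$ on which $h_t(x',y')$ lies within $\varepsilon/2$ of $h_{t_0}(x,y)$; a finite subcover $\{V_{x_i,y_i}\}$ of $K$ and $U:=\bigcap_i U_{x_i,y_i}$ then give $|h_t(x,y)-h_{t_0}(x,y)|<\varepsilon$ for all $t\in U$ and $(x,y)\in K$, by the usual two-point triangle inequality through $h_{t_0}(x_i,y_i)$. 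Since $t_n\in U$ for all sufficiently large $n$, uniform convergence on $K$ follows.

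Once these hypotheses are in place, Theorem~\ref{maint2} yields $K_{h_{t_n}}(\mu_{t_n},\nu_{t_n})\to K_{h_{t_0}}(\mu_{t_0},\nu_{t_0})$, which is sequential continuity of $t\mapsto K_{h_t}(\mu_t,\nu_t)$ at the arbitrary point $t_0$. The main potential obstacle, the passage from joint continuity in $(t,x,y)$ to uniform convergence in $(x,y)$ along the sequence $t_n$, is handled entirely by the compactness argument above and requires no further hypothesis on $T$ (not even a metric structure is used, only the topology on $T$ and compactness of $K$).
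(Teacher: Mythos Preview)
Your proposal is correct and follows the paper's own approach: the paper's entire proof is the one-line remark that it suffices to check $h_{t_n}\to h_{t_0}$ uniformly on compact subsets of $X\times Y$ whenever $t_n\to t_0$, after which everything follows from Theorem~\ref{maint2}. You have supplied exactly this reduction and spelled out the standard compactness argument for the uniform convergence, which the paper leaves implicit.
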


For the proof it suffices to use that $h_{t_n}(x,y)\to h_t(x,y)$ uniformly on compact sets if $t_n\to t$.

Obviously, the Prohorov property can be replaced by the uniform tightness of the families $\{\mu_t\}$ and $\{\nu_t\}$.

Suppose now that $X, Y, T$ are metric spaces, $X$ and $Y$ are complete, and for every $t\in T$
we are given measures $\mu_t\in \mathcal{P}_r(X)$ and $\nu_t\in \mathcal{P}_r(Y)$ such that
the mappings $t\mapsto \mu_t$ and $t\mapsto \nu_t$ are continuous in the weak topology
(which is equivalent to the continuity in the Kantorovich--Rubinshtein metric). Suppose also
that there is a continuous nonnegative function $(t,x,y)\mapsto h_t(x,y)$.
The question arises whether it is  possible to select an  optimal plan continuously depending on the parameter $t$.
It turns out that  such a choice is not always possible, as is shown by the examples below.
However, the situation improves for approximate optimal plans or in case of unique optimal plans.
Given $\varepsilon>0$, a measure $\sigma\in \Pi(\mu,\nu)$ will be called
$\varepsilon$-optimal for the cost function $h$ if
$$
\int f\, d\sigma \le K_h(\mu,\nu)+\varepsilon.
$$

\begin{theorem}\label{t-appr}
Suppose that for every $t$ there exist nonnegative Borel functions \mbox{$a_t\in L^1(\mu_t)$} and $b_t\in L^1(\nu_t)$
such that {\rm(\ref{mainboundt})} holds.
Then one can select $\varepsilon$-optimal  measures $\sigma_t^\varepsilon\in \Pi(\mu_t,\nu_t)$ for the cost functions $h_t$
such that they will be continuous in $t$ in the weak topology for every fixed $\varepsilon>0$.

If for every $t$ there is a unique optimal plan~$\sigma_t$, then it is continuous
in~$t$.
\end{theorem}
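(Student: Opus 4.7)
The second assertion follows directly from Corollary~\ref{conv-pl}: if the optimal plan $\sigma_t$ is unique for every $t\in T$, then given any sequence $t_n\to t_0$ in the metric space $T$ the corollary yields $\sigma_{t_n}\to\sigma_{t_0}$ weakly, and sequential continuity on a metric space is continuity of $t\mapsto\sigma_t$.

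For the first assertion my plan is a partition-of-unity construction on the paracompact metric space $T$ that exploits convexity of $\Pi(\mu_t,\nu_t)$. Fix $\varepsilon>0$. For each $t_0\in T$ pick an optimal plan $\pi_{t_0}\in\Pi(\mu_{t_0},\nu_{t_0})$; existence follows from the integrability condition~(\ref{mainboundt}) together with the usual compactness argument. Combining Theorem~\ref{maint1} (Hausdorff continuity of $t\mapsto\Pi(\mu_t,\nu_t)$ in~$d_{KR}$), the continuity of $t\mapsto K_{h_t}(\mu_t,\nu_t)$ furnished by the corollary to Theorem~\ref{maint2}, uniform continuity of $h$ on compact sets, and the uniform integrability in~(\ref{mainboundt}) with standard truncation arguments, I locate an open neighborhood $V_{t_0}$ of $t_0$ and a radius $\eta_{t_0}>0$ such that for every $t\in V_{t_0}$ the set
$$
F_{t_0}(t):=\bigl\{\sigma\in\Pi(\mu_t,\nu_t):d_{KR}(\sigma,\pi_{t_0})\le\eta_{t_0}\bigr\}
$$
is nonempty and each of its members is $(\varepsilon/2)$-optimal for $(\mu_t,\nu_t,h_t)$.

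The multifunction $F_{t_0}$ has nonempty closed convex values and is lower hemicontinuous by Theorem~\ref{maint1}, with values in the complete metric space $(\mathcal{P}_r(X\times Y),d_{KR})$ realized as a convex subset of the locally convex space of signed Radon measures. Michael's continuous selection theorem therefore supplies a continuous map $t\mapsto\pi^{t_0}_t\in F_{t_0}(t)$ on $V_{t_0}$. By paracompactness of $T$, extract a locally finite open refinement $\{W_\alpha\}$ of $\{V_{t_0}:t_0\in T\}$ with each $W_\alpha$ contained in some $V_{t_\alpha}$, and a continuous subordinate partition of unity $\{\phi_\alpha\}$. Define
$$
\sigma_t^\varepsilon:=\sum_\alpha\phi_\alpha(t)\,\pi^{t_\alpha}_t.
$$
Convexity of $\Pi(\mu_t,\nu_t)$ gives $\sigma_t^\varepsilon\in\Pi(\mu_t,\nu_t)$; convexity of $\sigma\mapsto\int h_t\,d\sigma$ together with the $(\varepsilon/2)$-optimality of each $\pi^{t_\alpha}_t$ yields $\int h_t\,d\sigma_t^\varepsilon\le K_{h_t}(\mu_t,\nu_t)+\varepsilon/2<K_{h_t}(\mu_t,\nu_t)+\varepsilon$; and continuity of $t\mapsto\sigma_t^\varepsilon$ in the weak topology follows from continuity of each $\pi^{t_\alpha}_\cdot$, continuity of the $\phi_\alpha$, and local finiteness of $\{W_\alpha\}$.

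The main obstacle is the local continuous selection step: Theorem~\ref{maint1} only records Hausdorff continuity of $t\mapsto\Pi(\mu_t,\nu_t)$, whereas the construction needs an honest $d_{KR}$-continuous map $t\mapsto\pi^{t_0}_t$. Michael's theorem achieves this conversion once its hypotheses are verified, the main technicality being that the target is a complete metric convex subset of a locally convex space of measures rather than a Banach space. If one wishes to bypass Michael's theorem entirely, an elementary but more laborious iterative refinement of the partition of unity, which averages a priori non-continuous local selections over progressively finer covers, can be made to work using only Theorem~\ref{maint1} and convexity of~$\Pi$.
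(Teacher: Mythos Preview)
Your proof is correct in outline and uses the same key tool as the paper---Michael's selection theorem---but you take a detour that the paper avoids. The paper applies Michael's theorem \emph{once}, globally, to the multivalued mapping
\[
\Psi(t)=\Bigl\{\pi\in\Pi(\mu_t,\nu_t):\int h_t\,d\pi\le K_{h_t}(\mu_t,\nu_t)+\varepsilon\Bigr\},
\]
and the bulk of the work is verifying directly that $\{t:\Psi(t)\cap U\neq\emptyset\}$ is open for every open $U$; this verification uses Theorem~\ref{maint1}, truncation, and a Lipschitz approximation argument that is essentially the content you sketch under ``standard truncation arguments.'' Your route instead builds local multifunctions $F_{t_0}$, applies Michael separately on each $V_{t_0}$, and then glues with a partition of unity. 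Since each $F_{t_0}(t)\subset\Psi(t)$ and convex combinations of $\varepsilon$-optimal plans remain $\varepsilon$-optimal, the gluing step is redundant once Michael is available: applying it directly to $\Psi$ gives the global selection in one stroke.

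One genuine soft spot: your claim that $F_{t_0}$ is lower hemicontinuous ``by Theorem~\ref{maint1}'' is not immediate. Theorem~\ref{maint1} gives Hausdorff continuity of $t\mapsto\Pi(\mu_t,\nu_t)$, but intersecting a lower hemicontinuous multifunction with a \emph{closed} ball can destroy lower hemicontinuity at boundary points. The fix is easy---since for $t\in V_{t_0}$ the set $F_{t_0}(t)$ always contains a point strictly inside the ball (namely something $d_{KR}$-close to $\pi_{t_0}$ itself), a short convexity argument pushes any boundary point slightly inward and recovers lower hemicontinuity---but it should be stated. The paper sidesteps this by working with $\Psi$ from the start and perturbing toward an optimal measure to get a strict inequality, which plays the same role.
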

\begin{proof}
The assumption implies the inclusion
$h_t\in L^1(\sigma)$ for all $\sigma\in \Pi(\mu_t,\nu_t)$.
The set
$$
M_t=\biggl\{\sigma\in \Pi(\mu_t,\nu_t)\colon \int h_t\, d\sigma=K_{h_t}(\mu_t,\nu_t)\biggr\}
$$
is convex and compact in $\mathcal{P}_r(X\times Y)$. Indeed, the convexity is obvious
and the compactness follows from the fact that $M_t$ is closed in the compact set $\Pi(\mu_t,\nu_t)$,
which is verified in the following way. Suppose that measures $\pi_n\in M_t$ converge weakly  to a measure~$\pi$.
For convergence of the integrals of the continuous function $h_t$ against the measures $\pi_n$ to the integral against the measure $\pi$
it suffices to verify (see \cite[Theorem~2.7.1]{B18}) that
$$
\lim\limits_{R\to\infty} \sup_n \int_{\{h_t\ge R\}} h_t\, d\pi_n =0.
$$
Since $\pi_n\in \Pi(\mu_t,\nu_t)$, this equality follows from the estimate
\begin{multline*}
\int_{\{h_t\ge R\}} h_t\, d\pi_n\le \int_{X\times Y} [2a_tI_{\{a_t\ge R/2\}} + 2b_tI_{\{b_t\ge R/2\}}] \, d\pi_n
\\
=2\int_{\{a_t\ge R/2\}} a_t\, d\mu_t+ 2\int_{\{b_t\ge R/2\}} b_t\, d\nu_t.
\end{multline*}
Set
$$
\Psi(t) = \biggl\{\pi \in \Pi(\mu_t, \nu_t)\colon
 \int h_t\, d\pi \le K_{h_t}(\mu_t, \nu_t) + \varepsilon
 \biggr\}.
 $$
In order to find continuous selections we verify the hypotheses of the classical Michael selection theorem
applied to the multivalued mapping $t\mapsto \Psi_t$ with convex  compact values
in the complete metrizable subset $\mathcal{P}_r(X\times Y)$ of the locally convex
space $\mathcal{M}_r(X\times Y)$ (see, e.g., \cite{AF} or \cite{RS}). We have to show that
for every open set $U$ in $\mathcal{P}_r(X\times Y)$ the set
$$
W=\{t\in T\colon \Psi_t\cap U\not=\emptyset \}
$$
is open in $T$. Let $w\in W$. Then there exists a measure $\sigma\in \Psi_{w}\cap U$, that is,
$\sigma \in \Pi(\mu_{w}, \nu_{w})$ and
$$
\int h_w\, d\sigma \le K_{h_w}(\mu_w, \nu_w) + \varepsilon.
$$
We can  assume that this inequality is strict, since otherwise we can
take a convex linear combination with an
optimal measure $\sigma_0\in M_w$, which for small $\alpha>0$ will give a  measure $(1-\alpha) \sigma + \alpha \sigma_0\in U$
with a smaller integral of the function~$h_w$. Thus,
$$
\int h_w\, d\sigma \le K_{h_w}(\mu_w, \nu_w) + \varepsilon -\delta, \quad \hbox{where} \ \delta>0.
$$
Let us show that there exists a  neighborhood $V$ of the point $w$ such that for every $v\in V$ there is
a measure $\sigma_v \in \Pi(\mu_v, \nu_v)$ for which
$$
\int h_v\, d\sigma \le K_{h_v}(\mu_v, \nu_v) + \varepsilon .
$$
Indeed, otherwise there exists a  sequence $w_n\to w$ such that
\begin{equation}\label{eq-c}
\int h_{w_n}\, d\zeta > K_{h_{w_n}}(\mu_{w_n}, \nu_{w_n}) + \varepsilon \quad \forall\, \zeta\in \Pi(\mu_{w_n}, \nu_{w_n}).
\end{equation}
It follows from (\ref{mainboundt}) that there is $R>1$ such that
\begin{equation}\label{srez}
\int [h_t -\min(h_t,R)]\, d\zeta \le \delta/8 \quad \quad \forall\, t\in T, \, \forall \, \zeta\in \Pi(\mu_t,\nu_t).
\end{equation}
Next, the weak continuity of $\mu_t$ and $\nu_t$ in $t$ implies  that the sequences $\{\mu_{w_n}\}$ and $\{\nu_{w_n}\}$
are uniformly tight. This implies the uniform tightness of the union of the sets $\Pi(\mu_{w_n}, \nu_{w_n})$. Hence
there is a   compact set $S\subset X\times Y$ such that
\begin{equation}\label{plot}
(\zeta+\sigma)((X\times Y)\backslash S)< \delta (32R)^{-1} \quad \forall\, n, \, \forall\, \zeta\in \Pi(\mu_{w_n}, \nu_{w_n}).
\end{equation}
Set
$$
g_t=\min(h_t,R).
$$
By the continuity of $h$ on the compact set $(\{w_n\}\cup \{w\})\times S$
there exists a Lipschitz function $(t,x,y)\to L_t(x,y)$ on $T\times X\times Y$ with values in $[0,R]$ for which
\begin{equation}\label{prib}
|g_{w_n}(x,y)-L_{w_n}(x,y)|\le \delta /32 \quad \forall\, n\ge 1, \, (x,y)\in S.
\end{equation}
Let $L>0$ be the Lipschitz constant of this function.
Since the compact set $\Pi(\mu_t, \nu_t)$ depends on $t$  continuously in the Hausdorff metric,
for all $n$ sufficiently large we  have
$$
H_K(\Pi(\mu_{w_n}, \nu_{w_n}),\Pi(\mu_{w}, \nu_{w}))\le \delta/(16 L).
$$
Hence for every such $n$ there is a measure $\zeta_n\in \Pi(\mu_{w_n}, \nu_{w_n})$ satisfying the estimate
$$
d_K(\sigma,\zeta_n)\le \delta (16L)^{-1},
$$
which yields the inequality
$$
\biggl|\int L_w\, d\sigma - \int L_w\, d\zeta_n\biggr| \le \delta/16.
$$
Since
$$
\sup_{(x,y)\in S} |L_w(x,y)-L_{w_n}(x,y)|\to 0,
\quad L_t\le R, \quad
(\zeta_n+\sigma)((X\times Y)\backslash S)< \delta (32R)^{-1},
$$
for all $n$ sufficiently large we  have
$$
\biggl|\int L_w\, d\sigma - \int L_{w_n}\, d\zeta_n\biggr| \le \delta/8.
$$
Therefore, by (\ref{plot}) and (\ref{prib}) we obtain
$$
\biggl|\int g_w\, d\sigma - \int g_{w_n}\, d\zeta_n\biggr| \le \delta/4,
$$
which by (\ref{srez}) gives the estimate
$$
\biggl|\int h_w\, d\sigma - \int h_{w_n}\, d\zeta_n\biggr| \le \delta/2.
$$
Since $K_{h_{w_n}}(\mu_{w_n},\nu_{w_n})\to K_{h_{w}}(\mu_{w},\nu_{w})$,
we arrive at the contradiction with (\ref{eq-c}).
Thus, the set $\{t\colon \Psi(t) \cap U \not= \emptyset\}$ open.

Finally, in case of unique optimal plans the result follows from Corollary~\ref{conv-pl}. 
\end{proof}

In the general case there is no continuous selection of optimal plans.

\begin{example}
{\rm
Let $\mu$ and $\nu$ be  Borel probability measures on a complete separable metric
space $X$ such that there are at least  two continuous mappings $f, g\colon X\to X$
taking $\mu$ to $\nu$ for which the images of $\mu$ under the mappings
$x\mapsto (x,f(x))$ and $x\mapsto (x,g(x))$ are different.
For example, for $\mu$ and $\nu$ one can take  Lebesgue measure on $[0,1]$.
Let us take for $T$ the sequence of points $1/n$ and~$0$.
Set
$$
h_0=0, \ h_t(x,y)=t |y-f(x)|^2 \quad \hbox{if } \ t=(2n-1)^{-1},
$$
$$
h_t(x,y)=t |y-g(x)|^2 \quad \hbox{if }\ t=(2n)^{-1}.
$$
 Let $\mu_t=\mu$, $\nu_t=\nu$.
For $t=0$ all measures in $\Pi(\mu,\nu)$ are optimal  for~$h_0$. For $t=(2n-1)^{-1}$ a unique optimal
measure for the cost function $h_t$ is the image of $\mu$ under the mapping $x\mapsto (x,f(x))$,
for $t=(2n)^{-1}$ a unique optimal measure for the cost function $h_t$ is the image of $\mu$ under the
 mapping $x\mapsto (x,g(x))$.
Then the indicated  optimal measures have no limit as $t\to 0$. For example, if
$\mu=\nu$ is  Lebesgue measure on $[0,1]$, then we can take $f(x)=x$, $g(x)=1-x$.
Here is another explicit example with the same measures
on~$[0, 1]$:
$$
h_t(x, y) = \min(|x-y|, |x+y-1| + t), \ t \ge 0,
h_t(x, y) = \min(|x-y| - t, |x+y-1|),\  t < 0.
$$
Then for $t>0$ the optimal plan is concentrated on the diagonal $x=y$, for
$t<0$ the optimal plan is concentrated on the diagonal $x+y=1$,
in all cases it is the normalized Lebesgue measure. For $t=0$ there is no uniqueness:
there are many optimal plans concentrated on the union of the diagonals,
some of them are not generated by Monge mappings. However, the left and right limits of optimal plans
are generated by Monge mappings.
}\end{example}

It is worth noting that a related, but not equivalent problem was considered in \cite{AP} and \cite{DPLS}
(see also \cite{V1}), where convergence in measure of Monge optimal mappings $T_\varepsilon\to T_0$ as $\varepsilon\to 0$
was shown for some special cost functions $h_\varepsilon$.  In our situation a relevant question would be
about some form of continuity of Monge optimal mappings $T_t$ in the case where $\mu_t$ does not depend on $t$
or all measures $\mu_t$ are absolutely continuous with respect to some reference measure $\lambda$.
Of course, in case of non-uniqueness of $T_t$ the question is about continuous selections of optimal mappings.
Here is a general result in the spirit of the cited papers in the case where optimal plans are generated
by mappings. The particular situation considered in the cited papers (see also \cite[Theorem~2.53]{V1})
deals with constant marginals and cost functions $h_t(x,y)=|x-y|+t |x-y|^2$ or a more general family of functions with the property that
$h_t(x,y)=|x-y|+t \varphi(|x-y|)+o(t)$ with a strictly convex function~$\varphi$. However, this special structure is needed
to guarantee the existence and uniqueness of Monge solutions. In the next abstract result the existence of ``Monge mappings'' is part
of the hypotheses.

Recall that convergence in measure for mappings to completely regular spaces is defined as follows. Let $\{d_\tau\}$ be a
family of pseudometrics defining the topology of a completely regular space~$X$ and let $\mu\in\mathcal{P}_r(X)$.
Borel mappings $T_n\colon X\to X$ converge in measure $\mu$ to a Borel mapping $T\colon X\to X$ if, for each~$\tau$
and each $\delta>0$,
one has $\mu(x\colon d_\tau(T_n(x),T(x))\ge \delta)\to 0$ as $n\to \infty$.

\begin{proposition}
Let $X$ be a completely regular space,
$\mu_n, \nu_n\in\mathcal{P}_r(X)$ for $n\in \mathbb{Z}^+$, $\mu_n\to\mu_0$ in variation, and
the measures $\mu_0$ and $\nu_0$ are concentrated on countable unions of metrizable compact sets
{\rm(}which holds automatically if $X$ is a Souslin space{\rm)}.
Suppose that
there are Borel mappings $T_n\colon X\to X$ such that the measures $\sigma_n$ equal to the images of the measures $\mu_n$
under the mappings $x\mapsto (x,T_n(x))$ converge weakly to~$\sigma_0$.
 Then the mappings $T_n$ converge to $T_0$ in measure with respect to~$\mu_0$.
  \end{proposition}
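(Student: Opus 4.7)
The plan is to combine Lusin's theorem for $T_0$ with a Urysohn-type separation argument in $X\times X$, and then transfer the resulting estimate from $\mu_n$ to $\mu_0$ via variation convergence.

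Fix a pseudometric $d_\tau$ from the family defining the topology of $X$, and positive numbers $\delta,\varepsilon>0$; the goal is to show
$$
\mu_0\bigl(\{x\colon d_\tau(T_n(x),T_0(x))\ge\delta\}\bigr)\le 2\varepsilon
$$
for all $n$ large enough. Projecting the weak convergence $\sigma_n\to\sigma_0$ to the second coordinate identifies $\nu_0$ with $\mu_0\circ T_0^{-1}$, so by the concentration hypotheses both $\mu_0$ and $\mu_0\circ T_0^{-1}$ sit on countable unions of metrizable compact sets. Up to a $\mu_0$-null set, $T_0$ may therefore be regarded as a Borel map into a second countable metric subspace of $X$, so Lusin's theorem furnishes a compact set $K\subset X$ with $\mu_0(K)>1-\varepsilon$ such that $T_0|_K$ is continuous.

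Consider the compact graph $\Gamma=\{(x,T_0(x))\colon x\in K\}$ and the closed ``bad'' set
$$
B=\{(x,y)\in X\times X\colon x\in K,\ d_\tau(y,T_0(x))\ge\delta\}.
$$
The set $B$ is \emph{closed in the full product} $X\times X$ (not merely in $K\times X$), because $K$ is closed and the function $(x,y)\mapsto d_\tau(y,T_0(x))$ is continuous on $K\times X$. Since $\Gamma$ and $B$ are disjoint, $\Gamma$ is compact, and $X\times X$ is completely regular, a Urysohn-type separation (available via the Stone--\v{C}ech compactification) produces a continuous function $\phi\colon X\times X\to[0,1]$ with $\phi|_\Gamma=0$ and $\phi|_B=1$. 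Since $\sigma_0$ is the image of $\mu_0$ under $x\mapsto(x,T_0(x))$ and $\phi$ vanishes on $\Gamma$,
$$
\int\phi\,d\sigma_0=\int_X\phi(x,T_0(x))\,d\mu_0(x)\le\mu_0(X\setminus K)<\varepsilon.
$$
Weak convergence $\sigma_n\to\sigma_0$ together with $I_B\le\phi$ then yields $\limsup_n\sigma_n(B)\le\int\phi\,d\sigma_0<\varepsilon$.

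Finally, $\sigma_n(B)=\mu_n(A_n)$ with $A_n=\{x\in K\colon d_\tau(T_n(x),T_0(x))\ge\delta\}$, and variation convergence gives $|\mu_0(A_n)-\mu_n(A_n)|\le\|\mu_n-\mu_0\|\to 0$, so $\limsup_n\mu_0(A_n)\le\varepsilon$. Adding $\mu_0(X\setminus K)<\varepsilon$ produces the displayed bound above, and letting $\varepsilon\to 0$ completes the argument. The main obstacles are justifying Lusin's theorem in the completely regular setting (both concentration hypotheses are used precisely to reduce $T_0$'s target to a second countable metric subspace), and sidestepping the portmanteau theorem for closed sets---which for general completely regular spaces would require uniform tightness of $\{\sigma_n\}$ that is not a priori available---by using the Urysohn function $\phi$, for which the \emph{compactness} of the graph $\Gamma$ is essential.
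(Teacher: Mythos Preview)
Your argument is correct and follows the same three-step skeleton as the paper---Lusin for $T_0$, integrate a well-chosen continuous test function against $\sigma_n$, then transfer from $\mu_n$ to $\mu_0$ by variation convergence---but the construction of the test function is genuinely different. The paper embeds $X$ homeomorphically into a power $\mathbb{R}^\tau$, extends $T_0|_K$ to a globally continuous map $S\colon X\to X$ (using coordinatewise Tietze extension), and then simply tests against $\psi(S(x),y)$ for the given pseudometric $\psi$; this integrates to zero against $\sigma_0$ on $K$ and yields $\int\psi(S(x),T_n(x))\,d\mu_n\to 0$ directly. You instead avoid the embedding and the extension altogether by exploiting compactness of the graph $\Gamma$ and separating it from the closed ``bad'' set $B$ with a Urysohn function; this is arguably cleaner, since the separation of a compact set from a disjoint closed set is elementary in any Tychonoff space (no Stone--\v Cech needed---a finite-cover argument with the pointwise separating functions suffices), and it sidesteps the question of extending $T_0|_K$ continuously over all of~$X$. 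The paper's route, on the other hand, has the advantage that the test function $\psi(S(x),y)$ is written down explicitly in terms of the data, and the limiting integral is computed rather than merely bounded. Your handling of the Lusin step is in the same spirit as the paper's (reduce the target to a metrizable compact via the hypothesis on $\nu_0$), though you might note---as the paper does---that the concentration hypothesis on $\mu_0$ is in fact dispensable if one invokes the general form of Lusin's theorem for Radon measures with second-countable target.
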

\begin{proof}
Let $\psi$ be a continuous pseudometric on~$X$.
We can assume that $0\le \psi\le 1$.
  Suppose first that $T_{0}$ is continuous. Then
 the function $\psi(T_{0}(x),y)$ is continuous on~$X^2$, hence
 due to weak convergence $\sigma_n\to \sigma_{0}$ we have
\begin{multline*}
\int \psi(T_0(x),T_n(x)) \, \mu_n(dx)=\int \psi(T_{0}(x),y)\, \sigma_{n}(dx\, dy)
\\
 \to
\int \psi(T_{0}(x),y)\, \sigma_{0}(dx\, dy)=\int \psi(T_0(x),T_{0}(x)) \, \mu_0(dx)= 0.
\end{multline*}
Then by convergence in variation
$$
\int \psi(T_0(x),T_n(x)) \, \mu_0(dx)\to 0.
$$
Hence $T_n\to T_{0}$ in measure $\mu_0$.

In the general case, we can embed $X$ homeomorphically into a suitable power  of the real line and assume that $X=\mathbb{R}^\tau$.
Given $\varepsilon>0$,
we can find a compact set $K$ with $\mu_0(K)>1-\varepsilon$ on which $T_{0}$ is continuous.
To this end, we find a metrizable compact set $Q$ with $\nu_0(Q)>1-\varepsilon$ and in the Borel set $T_0^{-1}(Q)$ we find
a metrizable compact set $Q_1$ with $\mu_0(Q_1)>1-\varepsilon$, which is possible, because $\mu_0(T_0^{-1}(Q))=\nu_0(Q)>1-\varepsilon$.
It remains to appy Luzin's theorem to the Borel mapping $T_0$ between metrizable compact sets $Q_1$ and~$Q$.
Actually, in order to apply a generalization of Luzin's theorem, it suffices that only $\nu_0$ be concentrated
on metrizable compact sets (see \cite[Section 7.14(ix)]{B07}).
Next, there is a continuous
mapping $S$ that coincides with $T_{0}$ on~$K$ (here we use that $X=\mathbb{R}^\tau$, so it suffices
to extend the components of~$T_0$). Repeating the same reasoning with
the function $\psi(S(x),y)$, we obtain that the integral of $\psi(S(x),T_n(x))$ against $\mu_0$
tends to the integral of $\psi(S(x), T_0(x))$, which is estimated by~$\varepsilon$, since $\psi(S(x),T_{0}(x))=0$
on~$K$. This yields convergence in measure in the general case.
\end{proof}

Note that convergence of $T_n$ to $T_0$ in measure $\mu_0$ is also sufficient for weak convergence
of~$\sigma_n$.

\begin{corollary}\label{conv-meas}
Suppose that in the situation of Corollary \ref{conv-pl} the space $X=Y$ is metric and the unique optimal plans
$\sigma_n$ are generated by unique Monge optimal mappings~$T_n$. If convergence $\mu_n\to\mu_0$ holds in variation, then
the mappings $T_n$ converge to $T_0$ in measure~$\mu_0$.
\end{corollary}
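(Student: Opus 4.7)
The plan is to apply Corollary~\ref{conv-pl} to obtain weak convergence of the joint optimal plans $\sigma_n\to\sigma_0$, and then feed this into the preceding Proposition to upgrade it to convergence in measure of the Monge mappings.

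First I would observe that the hypotheses of Corollary~\ref{conv-pl} are in force by assumption, and that since each $\sigma_n$ is generated by $T_n$, it is precisely the image of $\mu_n$ under the graph map $x\mapsto(x,T_n(x))$. Corollary~\ref{conv-pl} then gives $\sigma_n\to\sigma_0$ weakly in $\mathcal{P}_r(X\times X)$, which is exactly the input required by the preceding Proposition.

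Next I would verify the remaining hypotheses of the Proposition. Convergence $\mu_n\to\mu_0$ in variation is given. The structural condition that $\mu_0$ and $\nu_0$ be concentrated on countable unions of metrizable compact sets is automatic in our setting: since $X=Y$ is metric, any Radon measure $\mu_0$ is concentrated on a $\sigma$-compact set (choose compact $K_m$ with $\mu_0(X\setminus K_m)<1/m$ and pass to $\bigcup_m K_m$), and each such compact set is metrizable in the induced metric. The same applies to $\nu_0$.

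The Proposition then yields $T_n\to T_0$ in measure $\mu_0$, which is the desired conclusion. There is no substantive obstacle here: all the real work has already been carried out in Theorem~\ref{maint2}, in Corollary~\ref{conv-pl}, and in the Proposition, and the present statement merely packages the combination in the metric setting where the auxiliary regularity conditions are automatic.
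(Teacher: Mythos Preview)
Your proposal is correct and follows exactly the intended route: the paper states this corollary without proof immediately after the Proposition, leaving implicit precisely the combination you spell out --- use Corollary~\ref{conv-pl} to obtain weak convergence of the plans $\sigma_n\to\sigma_0$, note that in a metric space every Radon measure is automatically concentrated on a countable union of (metrizable) compact sets, and then apply the Proposition. There is nothing to add.
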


\begin{remark}
\rm
As shown above, in the situation when we deal with optimal transportation problems for triples $(\mu_n,\nu_n,h_n)$,
the hypothesis that the plans $\sigma_n$ converge weakly to $\sigma_0$  is fulfilled if optimal measures are unique, the measures $\nu_n$
converge weakly to $\nu_0$ and are uniformly tight, and the functions $h_n$
are continuous and converge to $h_0$ uniformly on compact sets.

If we do not assume weak convergence of plans, but $\{\nu_n\}$ converges weakly and is uniformly tight,
then the conclusion still holds for a subsequence in $\{T_n\}$ picked such that
the corresponding plans converge, provided it is known that all optimal plans for $h_0$
(or at least those in the closure of the considered subsequence of plans) are generated by Monge mappings.

The particular case of constant marginals $\mu$ and $\nu$ is not much simpler, because anyway we have to ensure
convergence of plans and need existence of Monge mappings.
It would be interesting to study approximate Monge solutions depending continuously on a parameter.
To this end, one could analyze the constructions in \cite{BKP} and \cite{P}.
\end{remark}

This research is supported by the
Russian Foundation for Basic Research Grant 20-01-00432 and
Moscow Center of Fundamental and Applied Mathematics.

\end{document}